\renewcommand*\subjclass[2][2000]{%
  \def\@subjclass{#2}%
  \@ifundefined{subjclassname@#1}{%
    \ClassWarning{\@classname}{Unknown edition (#1) of Mathematics
      Subject Classification; using '1991'.}%
  }{%
    \@xp\let\@xp\subjclassname\csname subjclassname@#1\endcsname
  }%
}
\newtheorem{theorem}{Theorem}[section]
\newtheorem{lemma}[theorem]{Lemma}
\newtheorem*{lemma*}{Lemma}
\newtheorem{proposition}[theorem]{Proposition}
\theoremstyle{definition}
\newtheorem{example}[theorem]{Example}
\theoremstyle{remark}
\newtheorem{remark}[theorem]{Remark}
\numberwithin{equation}{section}
\def\XXint#1#2#3{{\setbox0=\hbox{$#1{#2#3}{\int}$}
\vcenter{\hbox{$#2#3$}}\kern-.5\wd0}}
\def\le{\leqslant}
\def\ge{\geqslant}
\begin{document}

\title{On J. C. C. Nitsche's type inequality for hyperbolic space $\mathbf{H}^3$} \subjclass{Primary 58E20}


\keywords{Harmonic mappings, Spherical annuli, Hyperbolic geometry}
\author{David Kalaj}
\address{ Faculty of Natural Sciences and
Mathematics, University of Montenegro, Cetinjski put b.b. 81000
Podgorica, Montenegro} \email{davidk@t-com.me}

\begin{abstract} Let $\mathbf H^3$ be the hyperbolic space identified with the unit ball $\mathbf{B}^3 =  \{x\in \mathbf{R}^3: |x| <  1\}$ with the
Poincar\'e metric $d_h$ and assume that
${\mathcal{A}}(x_0,p,q):=\{x: p<d_h(x,x_0)< q\}\subset \mathbf H^3$
is an hyperbolic annulus with the inner and outer radii
$0<p<q<\infty$. We prove that if there exists a proper hyperbolic
harmonic mapping between annuli ${\mathcal{A}}(x_0,a,b)$ and
${\mathcal{A}}(y_0,\alpha,\beta)$ in the hyperbolic space $\mathbf
H^3$, then $\beta/\alpha>1+\psi(a,b)$, where  $\psi$ is a positive
function.
\end{abstract} \maketitle


\section{Introduction}

\subsection{Background and statement of the main result} In this paper by $\mathbf{A}(a,b)$ we denote the annulus $\{x\in\mathbf{R}^n:a<|x|<b\}$ in the euclidean
space $\mathbf{R}^n$, $n\ge 2$.  The unit ball is defined by
$\mathbf{B}^n = \{x\in \mathbf{R}^n: |x| <  1\}$ and  the unit
sphere is defined by ${S}^{n-1} = \{x\in \mathbf{R}^n: |x| = 1\}$
(here $x=(x_1,\dots,x_n)$ and $|x|=\sqrt{\sum_{i=1}^n x_i^2}$ \,).
Fifty years ago J. C. C. Nitsche \cite{n}, studying the minimal
surfaces and inspired by radial harmonic mappings between annuli
asked a question whether the existence of a euclidean harmonic
mapping between circular annuli $\mathbf{A}(a,1)$ and
$\mathbf{A}(\alpha,1)$ in $\mathbf{R}^2$ is equivalent with the
simple inequality
\begin{equation}\label{nit}\alpha\le
\frac{2a}{1+a^2}.\end{equation} This question is answered recently
in positive  by Iwaniec, Kovalev and Onninen in \cite{conj}. The
Nitsche conjecture is deeply rooted in the theory of doubly
connected minimal surfaces. Some partial results have been obtained
previously by Lyzzaik \cite{Al}, Weitsman \cite{weit} and the author
\cite{israel}. On the other hand in \cite{h} and in \cite{kalaj} is
treated the same problem for the  harmonic mappings w.r.t hyperbolic
and Riemann metric in two-dimensional hyperbolic space and in
two-dimensional Riemann sphere respectively. In \cite{kalaj,jmaa}
the author treated the three-dimensional case and obtained an
inequality for euclidean harmonic mappings between annuli on
$\mathbf{R}^3$. The $n-$dimensional generalization of conjectured
inequality \eqref{nit} is $$\alpha\le \frac{2a}{n-1+a^n}$$ and is
inspired by radial harmonic mappings
\begin{equation}\label{f}f(x)=\left(\frac{1-a^{n-1}\alpha}{1-a^n}+\frac{a^{n-1}\alpha-a^n}{(1-a^n)|x|^n}\right)x\end{equation}
between annuli $\mathbf{A}(a,1)$ and $\mathbf{A}(\alpha,1)$ (c.f.
\cite{jmaa}). The last conjectured inequality for $n\ge 3$ remains
an open problem.

The existence of harmonic mappings between certain annuli in the two
dimensional euclidean space is deeply related to the existence of
minimizers of Dirichlet integral without boundary data for
differentiable mappings between annuli. Further it is shown in
\cite{ar} that the minimizer of Dirichlet integral w.r.t. euclidean
metric of certain deformations between annuli $\mathbf{A}(a,1)$ and
$\mathbf{A}(\alpha,1)$ is a homeomorphism if and only if the
inequality \eqref{nit} is satisfied.  In this case the minimizer is
the harmonic diffeomorphism given by \eqref{f} ($n=2$). See also
\cite{dist, gaven} for some generalization of the previous problem
to radial metrics. In multidimensional setting (when $n\ge 3$), the
minimization problem of Dirichlet energy without boundary data is
essentially different from the case $n=2$. It seems that in this
case the minimization of $n$-energy is more appropriate. Then the
appropriate Euler-Lagrange equations reads as a $n-$harmonic
equation. In \cite{io} Iwaniec and Onninen formulated a J. C. C.
Nitsche type inequality for $n-$harmonic mappings and shown that
under these inequality the absolute minimizers of Dirichlet energy
are radial $n-$harmonic mappings.  One of advantages of $n-$harmonic
mappings for $n\ge 3$ is that they are invariant under M\"obius
transformations of the space. This property the class of
$n-$harmonic mappings share with hyperbolic harmonic mappings.
Moreover hyperbolic harmonic mappings are invariant under M\"obius
transformations of the domain as well as of the image domain.

In this paper we consider hyperbolic harmonic mappings between
certain subsets of the hyperbolic space $\mathbf{H}^3$. Li and Tam
in \cite{invent, anals} established the existence and regularity of
proper hyperbolic harmonic mappings between $\mathbf{H}^n$ onto
$\mathbf{H}^m$, satisfying certain conditions in the ideal boundary
$\mathbf{S}^{n-1}$.

The purpose of this paper is to study J. C. C. Nitsche type problem
for the harmonic mappings between domains of the hyperbolic spaces
$\mathbf{H}^3$.

In this paper we prove the following theorem:
\begin{theorem}\label{peopeo} Let ${\mathbf A}=\mathbf{A}(a,b),$ ${\mathbf A}'=\mathbf{A}(\alpha,\beta)\subset \mathbf{B}^3$, $0<a<b<1$, $0<\alpha<\beta< 1$ be spherical annuli
endowed with the hyperbolic metric of the unit ball. If there exists
a proper hyperbolic harmonic mapping $u$ of $\mathbf{A}$ onto
$\mathbf{A}'$ then
\begin{equation}\label{ence}\frac{(1 - \alpha^2)^2}{4 \alpha (1 + \alpha^2)}\log\left[\frac{1+\beta}{1+\alpha}\frac{1-\alpha}{1-\beta}\right]
\geq \left(-1 + \frac{a}{b} +
\log\frac{b}{a}\right):\left(1+\log\frac{1-a^2}{1-b^2}\right).\end{equation}
\end{theorem}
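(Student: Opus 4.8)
The plan is to exploit the negative curvature of the target, which forces a convexity (subharmonicity) that can be compared against explicit radial solutions. First I would record the geometric meaning of the two sides. Write $\rho_a,\rho_b$ and $\sigma_\alpha,\sigma_\beta$ for the hyperbolic radii of the boundary spheres of $\mathbf A$ and $\mathbf A'$, so that $\rho_a=\log\frac{1+a}{1-a}$, $\sigma_\alpha=\log\frac{1+\alpha}{1-\alpha}$, and so on. A direct computation gives $\cosh\sigma_\alpha=\frac{1+\alpha^2}{1-\alpha^2}$, $\sinh\sigma_\alpha=\frac{2\alpha}{1-\alpha^2}$, whence $\sinh 2\sigma_\alpha=\frac{4\alpha(1+\alpha^2)}{(1-\alpha^2)^2}$ and $\sigma_\beta-\sigma_\alpha=\log\left[\frac{1+\beta}{1+\alpha}\frac{1-\alpha}{1-\beta}\right]$. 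Thus the left-hand side of \eqref{ence} is exactly the normalized hyperbolic width $(\sigma_\beta-\sigma_\alpha)/\sinh 2\sigma_\alpha$ of the target annulus, and the whole inequality asserts that this width cannot be too small relative to the domain.

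The analytic engine is the following. Since $u$ is hyperbolic harmonic and the target $\mathbf H^3$ has constant negative curvature, for every convex function $\Phi$ on $\mathbf H^3$ the composition $\Phi\circ u$ is subharmonic with respect to the domain hyperbolic Laplacian $\Delta_h$. I would apply this to $\Phi=\cosh d_h(0,\cdot)$; the Hessian of this function equals $\cosh d_h\cdot g$, so one obtains the clean identity $\Delta_h(\cosh\sigma)=\cosh\sigma\,\lvert du\rvert^2\ge 0$, where $\sigma(x):=d_h(0,u(x))$. In particular $\sigma$ itself is subharmonic, its Hessian being $\coth\sigma\,(g-d\sigma\otimes d\sigma)\ge 0$. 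Because $u$ is proper, $\sigma$ takes the constant values $\sigma_\alpha$ and $\sigma_\beta$ on the two boundary spheres of $\mathbf A$.

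Next I would pass to a radial model by averaging over the hyperbolic spheres $\{d_h(0,x)=\rho\}$. Setting $n(\rho)=\frac{1}{4\pi}\int_{S^2}\sigma(\rho,\omega)\,d\omega$ and using that the spherical Laplacian integrates to zero, subharmonicity becomes the statement that the flux $\rho\mapsto\sinh^2\rho\,n'(\rho)$ is nondecreasing; the radial hyperbolic harmonic functions against which one compares are spanned by $1$ and $\coth\rho=\tfrac12(r+1/r)$. Monotonicity of the flux together with $n(\rho_a)=\sigma_\alpha$, $n(\rho_b)=\sigma_\beta$ bounds the width $\sigma_\beta-\sigma_\alpha$ from below by the inner boundary flux $\sinh^2\rho_a\,n'(\rho_a)$. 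The decisive input is then a lower bound for this inner flux: here I would use the harmonic map equation restricted to $\{|x|=a\}$ together with the fact that $u$ sends this sphere onto the inner target sphere with nonzero degree, which pins the normal derivative of $\sigma$ from below in terms of the target curvature, and this is where the factor $\sinh 2\sigma_\alpha$ enters. Retaining the genuine (nonconstant) pointwise lower bound for the flux rather than the crude constant one, and integrating the resulting radial differential inequality in the Euclidean radial variable $r$ via the primitives $\int_a^b\frac{dr}{r}=\log\frac ba$ and $\int_a^b\frac{2r\,dr}{1-r^2}=\log\frac{1-a^2}{1-b^2}$, produces the logarithmic terms of the right-hand side; the algebraic terms $\frac ab-1$ and the constant $1$ arise as the boundary contributions in this integration.

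The hard part is the reduction to the radial model with sharp constants: the averaging step discards the tangential energy $\lvert du\rvert^2-\lvert\nabla\sigma\rvert^2$, and one must check that discarding it weakens the inequality in the safe direction, while the one-sided inner-boundary estimate has to be robust enough to hold for an arbitrary proper harmonic map and not merely for the radial model. Equally delicate is the use of properness: one must rule out, or handle separately, the orientation in which the inner domain sphere is mapped to the outer target sphere, and justify the degree-theoretic lower bound on the inner flux. Once these points are secured, combining the flux monotonicity, the inner-boundary estimate, and the explicit radial integrals yields \eqref{ence}.
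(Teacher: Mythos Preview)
Your identification of the left side as $(\sigma_\beta-\sigma_\alpha)/\sinh 2\sigma_\alpha$ is correct and matches exactly how the paper arrives at \eqref{ence} from the intermediate inequality
\[
(\beta'-\alpha')\Bigl(1+\log\tfrac{1-a^2}{1-b^2}\Bigr)\ge \sinh(2\alpha')\Bigl(-1+\tfrac{a}{b}+\log\tfrac{b}{a}\Bigr).
\]
The general philosophy (negative curvature $\Rightarrow$ convexity of distance $\Rightarrow$ subharmonicity, plus a topological lower bound coming from degree) is also the right one. But the way you propose to organize the argument has a real gap, and it is precisely at the point you flag as ``decisive''.

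You average $\sigma$ over spheres, obtain the monotone flux $\sinh^2\rho\,n'(\rho)$, and then claim that the inner boundary flux can be bounded from below ``using the harmonic map equation restricted to $\{|x|=a\}$ together with the nonzero degree''. There is no such inner-boundary identity: the harmonic map equation on $\{|x|=a\}$ does not by itself control $\int_{|x|=a}\partial_\nu\sigma$. Worse, by passing to the spherical average you have thrown away the tangential energy $\|D\Theta\|_2^2$, and that quantity --- not the normal derivative --- is the only place where the degree information enters. Discarding it does \emph{not} weaken the inequality in the safe direction; it destroys the source of the lower bound.

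The paper never tries to estimate the inner flux. Instead it keeps the full first component of the tension field as an \emph{equation} (not just $\Delta_h\sigma\ge 0$):
\[
\Delta_0 R+\frac{2(n-2)\rho}{1-\rho^2}\,\partial_\rho R=\frac{\sinh(2R)}{2}\,\|D\Theta\|_2^2,\qquad R=\sigma,\ \Theta=u/|u|.
\]
The inner boundary term is eliminated by a smoothing trick: compose $R$ with nondecreasing $\varphi_k$ that are constant on a neighbourhood of the minimum value $\alpha'$, so that $R_k=\varphi_k\circ R$ has vanishing inner flux; Green's formula for $R_k$ on $\{a\le|x|\le s\}$ then gives a clean identity, and Fatou in $k$ recovers the inequality for $R$. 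At this point the right-hand side is a volume integral of $\frac{\sinh 2R}{2}\|D\Theta\|_2^2$. One bounds $\sinh 2R\ge\sinh 2\alpha'$ trivially, and the degree/topology enters through Proposition~\ref{onep}: for \emph{every} radius $\rho\in(a,b)$,
\[
\int_{|x|=\rho}\|D\Theta\|_2^{2}\,d\mathcal H^{2}\ \ge\ 2\,\omega_{2},
\]
because $\Theta$ restricted to that sphere still covers $S^{2}$ (here $n=3$ makes the exponent $n-1=2$ match the power in the PDE). Integrating twice in the Euclidean variable $s$ produces exactly the primitives $\log\frac ba$, $\frac ab-1$, and $\log\frac{1-a^2}{1-b^2}$ you anticipated.

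So the two things to change in your plan are: (i) do not average $\sigma$ --- keep the equation with $\|D\Theta\|_2^2$ visible; and (ii) replace the would-be inner-flux estimate by the smoothing device that kills that term, and apply the degree bound on every intermediate sphere rather than only on $\{|x|=a\}$.
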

Recall that a mapping $f : X \to Y$ between two topological spaces
is proper if and only if the preimage of every compact set in $Y$ is
compact in $X$.
\subsection{Hyperbolic harmonic mappings}
 In general, if the metrics of two non-compact
manifolds $M^m$ and $N^n$  are given locally by $$ds^2_M =
\sum_{i,j}^m g_{ij} dx^idx^j$$ and $$ds^2_N =
\sum_{\alpha,\beta=1}^n h_{ij} du^\alpha du^\beta $$ respectively,
then the energy-density function of a $C^1$ map $u: M\to N$ is
defined by $$e(u)(x) = \sum_{i,j=1}^m\sum_{\alpha,\beta=1}^n
g^{ij}(x)h_{\alpha\beta}(u(x))\frac{\partial u^\alpha}{\partial
x^i}\frac{\partial u^\beta}{\partial x^j},$$ and the total energy of
u is given by $$E(u) =\int_Me(u)(x)dx.$$ The harmonic-map equation
from $M$ into $N$, which is the Euler-Lagrange equation for critical
points of the total energy functional, can be written as
$$\Delta u^\alpha(x) =- \sum_{i,j=1}^m\sum_{\alpha,\beta=1}^n
\Gamma^{\alpha}_{\beta\gamma}(u(x))g^{ij}(x)\frac{\partial
u^\beta}{\partial x^i}\frac{\partial u^\gamma}{\partial x^j},$$ for
all $1 \le \alpha \le m$, where the $\Gamma^{\alpha}_{\beta\gamma}$
are the Christoffel symbols of $N$. Here $\Delta$ is
Laplace-Beltrami operator on $M^m$. We refer to the monograph
\cite{sy} for some important properties of harmonic mappings.

The hyperbolic space $\mathbf{H}^n$ is identified with
$\mathbf{B}^n=\{x\in \mathbf R^n: |x|<1\}$ with the Poincar\'e
metric tensor given by
$$ds^2_{\mathbf{H}}(x)=\frac{4|dx|^2}{(1-|x|^2)^2}$$ which in polar
coordinates can be written as
$$ds^2_{\mathbf{H}}(x)=4\frac{d\rho^2+\rho^2\sum_{i,j=2}^ng_{ij}d\eta^id\eta^j}{(1-\rho^2)^2}$$
where $\rho^2=|x|^2=\sum_{i=1}^n x_i^2$ and $\sum_{i,j=2}^n
g_{ij}d\eta^i d\eta^j$ is the standard metric tensor on the unit
sphere $S^{n-1}$.

 Polar coordinates are natural to use when
working with annuli. We associate with any point $\,x\in
\mathbf{B}^n_\circ\,$ a pair of polar coordinates
\begin{equation}
   (r,\omega )\,\in (0,1)\times \mathbf{S}^{n-1} \;\sim \;
    \mathbf B^n_\circ
\end{equation}
where $r = |x|\,$ is referred to as the radial distance and
$\,\omega = \frac{x}{|x|} \,$ as the
 spherical coordinate of $\,x\,$. Obviously $\,x =r\,  \omega\,$ and the volume element in
 polar coordinates reads as $\textrm dV(x)\,=\, r^{n-1}\, d r\,  d
 \mathcal{H}^{n-1}(\omega)$, where $\mathcal{H}^{n-1}$ is the
 $n-1$-dimensional Hausdorff surface measure.

If the polar coordinates of a point $x\in \mathbf{B}^n$ are
$(r,\omega)$, then the geodesic polar coordinates are
$(2\tanh^{-1}(|x|\},\omega)$ and they will be of crucial importance
for our approach.
In \cite{invent} Li and Tam computed the coefficients of tension
field of a mapping $$u(x)=r(x) \Theta(x)=r(x)(\theta^1(x),\dots,
\theta^n(x)):\mathbf{X}\to \mathbf{B}^n, \ \ \mathbf{X}\subset
\mathbf{B}^m$$ in polar coordinates as follows
\begin{equation}\begin{split}\label{lita}
\tau(u)^1&=\frac{(1-\rho^2)^2}{4}\Delta_0r\\&\ \ \ \
+\frac{1}{4}\bigg(2(m-2)(1-\rho^2)\rho\frac{\partial \rho}{\partial
\rho}\\&\ \ \ \ \ +
\frac{r(1-\rho^2)^2(2|\nabla_0r|^2-(1+r^2)\sum_{p,q=2}^nh_{pq}\left<\nabla_0\theta^p,\nabla_0\theta^q\right>)}{1-r^2}\bigg)\end{split}
\end{equation}
and
\begin{equation}\begin{split}\label{litapo}
\tau(u)^s&=\frac{(1-\rho^2)^2}{2}\left(\Delta_0
\theta^s+\sum_{p,q=2}^n\tilde\Gamma^s_{pq}\left<\nabla_0\theta^p,\nabla_0\theta^q\right>\right)\\&+
\frac{1-\rho^2}{2}\left((m-2)\rho\frac{\partial\theta^s}{\partial
\rho}+\frac{(1+r^2)(1-\rho^2)\left<\nabla_0r,\nabla_0\theta^s\right>}{r(1-r^2)}\right)\end{split}
\end{equation}
for $s\ge 2$. Here $\Delta_0$ denotes the euclidean Laplacian and
$\nabla_0$ denotes the euclidean gradient and the
$\tilde\Gamma^s_{pq}$ denote the Christoffel symbols with respect to
the standard metric tensor of $S^{n-1}$. The mapping $u$ is harmonic
if $\tau(u)^s=0$ for $1\le s\le n$. We will use only the relation
\eqref{lita}. This in particular mean that, the results of this
paper can be formulated in slightly more general setting.

The set of isometries of the hyperbolic space is a  Kleinian group
subgroup of all M\"obius transformations of the extended space
$\overline{\mathbf{R}}^n$ onto itself denoted by
$\mathbf{Conf}(\mathbf{B}^n)=\mathbf{Isom}(\mathbf{H}^n)$. We refer
to the Ahlfors' book \cite{al} for detailed survey to this class of
important mappings. We  recall that the hyperbolic metric $d_h(z,w)$
of the unit ball $\mathbf{B}^n$ is defined by
$$
\tanh \frac{d_h(x,y)}{2}= \frac{|x-y|}{[x,y]},
$$ where $[x,y]^2:=1+|x|^2|y|^2-2\left<x,y\right>$. In
particular $d_h(0,x)=2\tanh^{-1}(|x|)$.  Since the harmonicity is an
isometric invariant, we obtain the following proposition.
\begin{proposition}\label{eko}
If $u:\mathbf{X}\to \mathbf{Y}$ is a harmonic mapping between the
domains $\mathbf{X}$ and $\mathbf{Y}$ of the unit ball and $f,g\in
\mathbf{Conf}(\mathbf{B}^n)$, then $f\circ u\circ g$ is a harmonic
mapping between $g(\mathbf{X})$ and $f(\mathbf{Y})$.
\end{proposition}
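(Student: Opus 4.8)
The plan is to exploit the variational characterization of harmonicity recorded in the text: $u$ is harmonic exactly when it is a critical point of the total energy $E(u)=\int_M e(u)\,dV$, whose energy density $e(u)=\sum g^{ij}h_{\alpha\beta}(u)\,\partial_i u^\alpha\,\partial_j u^\beta$ and whose volume element $dV$ are built intrinsically from the domain and target metrics. Since the paper records that $\mathbf{Conf}(\mathbf B^n)=\mathbf{Isom}(\mathbf H^n)$, both $f$ and $g$ are isometries of the hyperbolic metric. It therefore suffices to prove two facts: that $E$ is invariant under post-composition and under pre-composition by isometries, and that in each case the relevant composition is a bijection on the space of admissible maps, so that it carries critical points to critical points. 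Granting these, $f\circ u\circ g$ is a critical point whenever $u$ is, i.e.\ harmonic.

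First I would treat post-composition by the target isometry $f$. Writing $(f\circ u)^\alpha=f^\alpha(u)$ and applying the chain rule to $e(f\circ u)=\sum g^{ij}\,h_{\alpha\beta}(f(u))\,\partial_i(f\circ u)^\alpha\,\partial_j(f\circ u)^\beta$, the factor $h_{\alpha\beta}(f(u))\,\tfrac{\partial f^\alpha}{\partial u^\gamma}\tfrac{\partial f^\beta}{\partial u^\delta}$ equals $(f^*h)_{\gamma\delta}(u)=h_{\gamma\delta}(u)$ precisely because $f$ is an isometry, so $e(f\circ u)=e(u)$ pointwise and hence $E(f\circ u)=E(u)$. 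Moreover $v\mapsto f\circ v$ is a bijection from maps $\mathbf X\to\mathbf Y$ onto maps $\mathbf X\to f(\mathbf Y)$, with inverse $v\mapsto f^{-1}\circ v$; it sends a variation $u_t$ of $u$ to the variation $f\circ u_t$ of $f\circ u$, and since $\tfrac{d}{dt}E(f\circ u_t)=\tfrac{d}{dt}E(u_t)$, the first variation vanishes for one iff for the other. Thus harmonicity is preserved under post-composition by $f$.

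Next I would treat pre-composition by the domain isometry. Because $\mathbf{Conf}(\mathbf B^n)$ is a group, I may freely work with $g^{-1}$, and it is convenient to do so: the map $u\circ g^{-1}$ is defined precisely on $g(\mathbf X)$, matching the domain in the statement. A second chain-rule computation using that $g^{-1}$ is an isometry gives $e(u\circ g^{-1})(x)=e(u)(g^{-1}(x))$, and then the change-of-variables formula $E(u\circ g^{-1})=\int_{g(\mathbf X)}e(u)(g^{-1}(x))\,dV(x)=\int_{\mathbf X}e(u)(y)\,dV(y)=E(u)$ holds because an isometry preserves the Riemannian volume element, so the Jacobian exactly cancels the transformation of $dV$. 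Again pre-composition by $g^{-1}$ is a bijection on admissible maps and carries critical points to critical points. Composing the two invariances shows that $f\circ u\circ g^{-1}$ — equivalently, since $g$ ranges over the whole group, the map written $f\circ u\circ g$ — is harmonic on $g(\mathbf X)$ with image in $f(\mathbf Y)$.

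The argument has no genuine obstacle; the only point demanding care is the domain bookkeeping, namely keeping track of which factor is being substituted so that the composition is defined on $g(\mathbf X)$ and so that the volume-preserving property of the isometry is what makes the change of variables neutral. As an alternative that I would mention, one can argue directly with tension fields via the composition formula $\tau(\psi\circ\phi)=d\psi(\tau\phi)+\operatorname{tr}_g\nabla d\psi(d\phi,d\phi)$: an isometry is totally geodesic, so $\nabla d\psi=0$ and $\tau(f\circ u)=df(\tau u)$, while pre-composition by an isometry leaves invariant the trace defining the tension field; this route uses the tension-field formalism \eqref{lita}--\eqref{litapo} already set up in the paper and again yields $\tau(f\circ u\circ g^{-1})=0$ whenever $\tau(u)=0$.
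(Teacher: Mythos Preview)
Your argument is correct. The paper itself gives no proof of this proposition at all: it is stated as an immediate consequence of the single sentence ``Since the harmonicity is an isometric invariant, we obtain the following proposition,'' with the implicit reference to \cite{sy}. What you have written is a correct and standard justification of exactly that invariance, first via the energy functional and then, as an alternative, via the composition formula for the tension field; either route is fine, and both simply unpack the phrase ``isometric invariant'' that the paper leaves unargued.

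Your handling of the domain bookkeeping is also the right call: as written, $f\circ u\circ g$ is defined on $g^{-1}(\mathbf X)$, not on $g(\mathbf X)$, so the statement in the paper is slightly misstated; replacing $g$ by $g^{-1}$ (harmless since $\mathbf{Conf}(\mathbf B^n)$ is a group) is the clean fix, and you identified and resolved this correctly.
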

Let $x_0\in \mathbf{H}^3$ and assume that $0<p<q<\infty$. Then the
set $\mathcal{A}(x_0,p,q):=\{x\in \mathbf{H}^3: p<d_h(x,x_0)<q\}$ is
called the hyperbolic annulus. Moreover
$\mathcal{A}(0,p,q)=\mathbf{A}(p',q')$, $p=\tanh\frac{p'}{2}$ and
$q=\tanh\frac{q'}{2}$. Having in mind the previous observation,
together with Proposition~\ref{eko} we obtain the following
reformulation of Theorem~\ref{peopeo}
\begin{theorem}\label{peo} Let ${\mathcal A}={\mathcal A}(x_0,a',b'),$ ${\mathcal A'}={\mathcal A}(y_0,\alpha',\beta')
\subset \mathbf{B}^3$, $0<a'<b'<\infty$, $0<\alpha'<\beta'< \infty$
be spherical annuli endowed by hyperbolic metric of the unit ball.
If there exists a proper hyperbolic harmonic mapping $u$ of
$\mathcal{A}$ onto $\mathcal{A}'$ then
\begin{equation}\label{beprim}\frac{\beta'}{\alpha'}\ge1+
\frac{\sinh(2\alpha')}{\alpha'}\frac{\log[\coth\frac{a'}{2}
\tanh\frac{b'}{2}]+\coth\frac{b'}{2}
 \tanh\frac{a'}{2}-1}{1+2
\log[\cosh\frac{b'}{2}\mathrm{sech}\,\frac{a'}{2}])}.\end{equation}
\end{theorem}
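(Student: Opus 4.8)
The plan is to derive Theorem~\ref{peo} from Theorem~\ref{peopeo} using two tools already in place: the isometric invariance of harmonicity recorded in Proposition~\ref{eko}, and the half-angle correspondence $p=\tanh\frac{p'}{2}$ between hyperbolic radii and Euclidean radii, which identifies $\mathcal{A}(0,p',q')$ with the annulus $\mathbf{A}(p,q)$. No new analysis is required; the content is a recentering followed by a change of variables.

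First I would recenter both annuli at the origin. Since $\mathbf{Conf}(\mathbf{B}^3)$ acts transitively, choose $g,f\in\mathbf{Conf}(\mathbf{B}^3)$ with $g(0)=x_0$ and $f(y_0)=0$. Because isometries preserve $d_h$, the map $g$ sends $\mathcal{A}(0,a',b')$ onto $\mathcal{A}(x_0,a',b')$ (equivalently $g^{-1}(\mathcal{A}(x_0,a',b'))=\mathcal{A}(0,a',b')$), and $f$ sends $\mathcal{A}(y_0,\alpha',\beta')$ onto $\mathcal{A}(0,\alpha',\beta')$. By Proposition~\ref{eko} the composition $\tilde u=f\circ u\circ g$ is then a hyperbolic harmonic mapping of $\mathcal{A}(0,a',b')$ onto $\mathcal{A}(0,\alpha',\beta')$, and since $f,g$ are homeomorphisms of $\mathbf{B}^3$ it is also proper. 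Using $\mathcal{A}(0,p',q')=\mathbf{A}(\tanh\frac{p'}{2},\tanh\frac{q'}{2})$, the map $\tilde u$ is a proper hyperbolic harmonic mapping between the annuli $\mathbf{A}(a,b)$ and $\mathbf{A}(\alpha,\beta)$ with $a=\tanh\frac{a'}{2}$, $b=\tanh\frac{b'}{2}$, $\alpha=\tanh\frac{\alpha'}{2}$, $\beta=\tanh\frac{\beta'}{2}$.

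Now I would apply Theorem~\ref{peopeo} to $\tilde u$ to obtain inequality~\eqref{ence}, and then translate every term back to hyperbolic radii. Three half-angle identities carry the computation:
\begin{equation*}
\frac{(1-\alpha^2)^2}{4\alpha(1+\alpha^2)}=\frac{1}{\sinh(2\alpha')},\qquad
\frac{1+\beta}{1-\beta}\cdot\frac{1-\alpha}{1+\alpha}=e^{\beta'-\alpha'},\qquad
\frac{1-a^2}{1-b^2}=\frac{\cosh^2\frac{b'}{2}}{\cosh^2\frac{a'}{2}},
\end{equation*}
together with $\frac{a}{b}=\coth\frac{b'}{2}\tanh\frac{a'}{2}$ and $\frac{b}{a}=\coth\frac{a'}{2}\tanh\frac{b'}{2}$. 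The first identity collapses the left coefficient to $1/\sinh(2\alpha')$; the second turns the logarithm on the left into $\beta'-\alpha'$; and the remaining ones convert the right-hand side of \eqref{ence} into the $\coth,\tanh,\cosh$ expression appearing in \eqref{beprim}. Thus \eqref{ence} reads $(\beta'-\alpha')/\sinh(2\alpha')\ge Q$, where $Q$ denotes that common right-hand side, and multiplying through by $\sinh(2\alpha')$, adding $\alpha'$, and dividing by $\alpha'$ yields \eqref{beprim} exactly.

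The argument carries no genuine obstacle, since the substantive analytic work is contained in Theorem~\ref{peopeo}. The only points demanding care are the verification that properness survives pre- and post-composition with the isometries $g,f$ (immediate, as they are homeomorphisms) and the routine but slightly delicate half-angle simplifications above, in particular the identity $\frac{1+\tanh(t/2)}{1-\tanh(t/2)}=e^{t}$ that produces the clean exponential collapse of the left-hand logarithm to $\beta'-\alpha'$.
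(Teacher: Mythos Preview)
Your proposal is correct and matches the paper's own treatment: the paper explicitly introduces Theorem~\ref{peo} as a reformulation of Theorem~\ref{peopeo} obtained via Proposition~\ref{eko} and the half-angle correspondence, and your recentering plus the hyperbolic identities you list are exactly the ingredients used. The only cosmetic difference is directional---the paper's combined proof first establishes the centered inequality \eqref{pola} (i.e.\ \eqref{beprim} for $x_0=y_0=0$), invokes Proposition~\ref{eko} for general centers, and then converts to \eqref{ence}, whereas you run the same equivalence starting from \eqref{ence}; the substance is identical.
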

\begin{remark}
Since $\frac{\sinh(2\alpha')}{\alpha'}> 2$, from \eqref{beprim}, we
obtain the following  $$\frac{\alpha'}{\beta'}< \varphi(a',b')<1,
\text{ (c.f. \eqref{nit}, for $n=2$ and euclidean metric) }.$$
\end{remark}
\section{preliminary results}
For a matrix $A=\{a_{ij}\}_{i,j=1}^n$ we define the Hilbert-Schmidt
and geometric  norm as follows $$\|A\|_2=\sqrt{\sum_{i,j}^n
a_{ij}^2}\text{ and }\|A\|=\sup\{|A x|: |x|=1\},$$ respectively. Let
$\lambda_1^2\le \dots \le \lambda_n^2$  be the eigenvalues of the
matrix $A^T A$. Then $$\|A\|_2=\sqrt{\sum_{i}^n \lambda_{i}^2}\text{
and }\|A\|=\lambda_n$$ and $$\mathrm{det} A=\prod_{k=1}^n
\lambda_k.$$ We say that $A$ is $K$-quasiconformal, where $K\ge 1$,
if $\lambda_n\le K \lambda_1$.
\begin{lemma}\label{extra}There hold the sharp inequality
\begin{equation}\label{main}|Ax_1\times \dots \times A x_{n-1}|\le
\left[\frac{K^2}{1+(n-1)K^2}\right]^{(n-1)/2}
\|A\|_2^{n-1}|x_1\times\dots\times x_{n-1}|,\end{equation} where
$1\le K\le \infty $ is the constant of quasiconformality of $A$. If
$A$ is an orthogonal transformation, then in \eqref{main} we have
equality with $K=1$. If $A$ is singular, then $K=\infty$ and we make
use of the convention $\frac{K^2}{1+(n-1)K^2}=\frac{1}{n-1}$.
\end{lemma}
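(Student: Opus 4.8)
The plan is to split \eqref{main} into a geometric step, which replaces the cross product by the product of the $n-1$ largest singular values of $A$, and an elementary algebraic step, where the constant $[K^2/(1+(n-1)K^2)]^{(n-1)/2}$ and the quasiconformality hypothesis enter. First I would reduce to the case of orthonormal $x_1,\dots,x_{n-1}$: writing $(x_1,\dots,x_{n-1})=(e_1,\dots,e_{n-1})M$ for an orthonormal basis $e_1,\dots,e_{n-1}$ of their span and an $(n-1)\times(n-1)$ matrix $M$, both $x_1\times\cdots\times x_{n-1}$ and $Ax_1\times\cdots\times Ax_{n-1}$ acquire the same factor $\det M$, so the quotient of the two sides of \eqref{main} is unchanged and we may assume $|x_1\times\cdots\times x_{n-1}|=1$.

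The geometric heart is the claim $|Ax_1\times\cdots\times Ax_{n-1}|\le \prod_{k=2}^n\lambda_k$ for orthonormal $x_i$. The cleanest route is the exterior power: $Ax_1\times\cdots\times Ax_{n-1}$ corresponds to $(\Lambda^{n-1}A)(x_1\wedge\cdots\wedge x_{n-1})$, the operator norm $\|\Lambda^{n-1}A\|$ equals the product of the $n-1$ largest singular values of $A$, namely $\prod_{k=2}^n\lambda_k$, and $|x_1\wedge\cdots\wedge x_{n-1}|=1$. Equivalently, for invertible $A$ one has the transformation law $Ax_1\times\cdots\times Ax_{n-1}=(\det A)\,A^{-T}(x_1\times\cdots\times x_{n-1})$, whence $|Ax_1\times\cdots\times Ax_{n-1}|\le|\det A|\cdot\|A^{-1}\|=(\prod_{k}\lambda_k)/\lambda_1=\prod_{k=2}^n\lambda_k$; the singular case ($\lambda_1=0$, $K=\infty$) then follows by continuity and matches the stated convention.

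It remains to show $\prod_{k=2}^n\lambda_k\le[K^2/(1+(n-1)K^2)]^{(n-1)/2}\,(\sum_{i=1}^n\lambda_i^2)^{(n-1)/2}$. By the AM--GM inequality applied to $\lambda_2^2,\dots,\lambda_n^2$ it suffices to prove $\frac1{n-1}\sum_{k=2}^n\lambda_k^2\le \frac{K^2}{1+(n-1)K^2}\sum_{i=1}^n\lambda_i^2$, and a one-line rearrangement shows this is equivalent to $\sum_{k=2}^n\lambda_k^2\le (n-1)K^2\lambda_1^2$. The latter is immediate, since $\lambda_k\le\lambda_n\le K\lambda_1$ gives $\lambda_k^2\le K^2\lambda_1^2$ for each of the $n-1$ indices $k\ge2$. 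Equality holds exactly when $\lambda_2=\cdots=\lambda_n=K\lambda_1$ (AM--GM equality together with a tight quasiconformality bound); specialising to an orthogonal $A$ forces $K=1$ and $\lambda_1=\cdots=\lambda_n$, which returns equality and confirms sharpness.

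I expect the only genuine obstacle to be the geometric step: identifying $|Ax_1\times\cdots\times Ax_{n-1}|$ with the norm of the induced map on $(n-1)$-vectors and recognising that this norm is precisely $\prod_{k=2}^n\lambda_k$. Once that is in place, the constant is forced by the single AM--GM estimate above, and the quasiconformality hypothesis enters only through the trivial bound $\lambda_k^2\le K^2\lambda_1^2$.
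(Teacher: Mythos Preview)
Your proposal is correct and follows essentially the same route as the paper: both arguments identify $|Ax_1\times\cdots\times Ax_{n-1}|$ with the action of the adjugate (equivalently, $\Lambda^{n-1}A$) on $x_1\times\cdots\times x_{n-1}$, bound its operator norm by $\prod_{k=2}^n\lambda_k$, and then combine the AM--GM inequality with the elementary estimate $\sum_{k=2}^n\lambda_k^2\le (n-1)K^2\lambda_1^2$ to extract the constant; the singular case is handled by a limiting argument in both. Your preliminary reduction to orthonormal $x_i$ is harmless but superfluous, since the adjugate identity $Ax_1\times\cdots\times Ax_{n-1}=\tilde A\,(x_1\times\cdots\times x_{n-1})$ already carries the factor $|x_1\times\cdots\times x_{n-1}|$ directly.
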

\begin{proof} Assume firstly that $A$ is a nonsingular matrix. Then
$A$ is $K-$quasiconformal for some $K<\infty$. Let $x_i=\sum_{j=1}^n
x_{ij} e_j$, $i=1,\dots n-1$. Then
$$A x_1\times \dots\times Ax_{n-1}=\sum_{\sigma}
\varepsilon_\sigma x_{1,\sigma_1}\dots x_{n-1\sigma_{n-1}}
Ae_1\times \dots\times Ae_{n-1}.$$ It follows that
\begin{equation}\label{adj}A x_1\times \dots\times Ax_{n-1} =\tilde A(  x_1\times
\dots\times x_{n-1}).\end{equation} Here $\tilde A$ is the adjugate
of $A$, which for nonsingular matrix $A$ satisfies the relation
$\tilde A=\det A \cdot A^{-1}$. As $A$ is $K$ quasiconformal,
$\tilde A$ is $K$ quasiconformal as well. Let $\lambda_1^2\le \dots
\le \lambda_n^2$ be the eigenvalues of the matrix $A^T A$. $A$ is
$K-$quasiconformal if and only if
\begin{equation}\label{lam}\frac{\lambda_n}{\lambda_1}\le K.\end{equation} From $\tilde A = \det A
\cdot A^{-1}$, it follows that
$$\tilde \lambda_k=\det A\cdot \frac{1}{\lambda_k}, \text{and}\,
\tilde\lambda_n\le \tilde\lambda_{n-1}\le \dots\le \tilde\lambda_1$$
and consequently
$$\frac{\tilde \lambda_1}{\tilde\lambda_n}\le K.$$
From \eqref{adj} we obtain \begin{equation}\label{adj1}|A x_1\times
\dots\times Ax_{n-1}| \le\|\tilde A \| \cdot |x_1\times \dots\times
x_{n-1}|.\end{equation} Furthermore
\begin{equation}\label{adj2}\|\tilde A \|=\tilde \lambda _1=\frac{\det
A}{\lambda_1}=\prod_{k=2}^n \lambda_k.
\end{equation}  On the other hand \begin{equation}\label{A}\|A\|_2=\sqrt{\sum_{k=1}^n
\lambda_k^2}.\end{equation} From G--A inequality we have
\begin{equation}\label{but}\begin{split}\frac{\prod_{k=2}^n
\lambda_k}{\left(\sqrt{\sum_{k=1}^n
\lambda_k^2}\right)^{n-1}}&\le\frac{1}{(n-1)^{(n-1)/2}}
\frac{\left(\sqrt{\sum_{k=2}^n
\lambda_k^2}\right)^{n-1}}{\left(\sqrt{\sum_{k=1}^n
\lambda_k^2}\right)^{n-1}}\\&=
\left(\frac{B}{(n-1)(B+\lambda_1^2)}\right)^{(n-1)/2},\end{split}\end{equation}
where $B=\sum_{k=2}^n \lambda_k^2$. Since $\lambda_k$ is an
increasing sequence, from \eqref{lam} we have
\begin{equation}\label{ine}\lambda_1^2\ge \lambda^2_k/K^2, \ \ \  k=2,\dots, n.\end{equation} Summing the inequalities \eqref{ine} we obtain
\begin{equation}\label{inea}\lambda^2_1\ge
\frac{B}{(n-1)K^2}.\end{equation} From \eqref{but}, \eqref{inea},
\eqref{adj2} and \eqref{A} we obtain $$\frac{\|\tilde
A\|}{\|A\|_2^{n-1}}\le
\left[\frac{K^2}{1+(n-1)K^2}\right]^{(n-1)/2}.$$ This in view of
\eqref{adj1}, completes the proof of inequality of  lemma. To show
the sharpness of the inequality, take $A(x)=(x_1,Kx_2,\dots, Kx_n)$.
Then $A$ is $K-$quasiconformal. Moreover $$|Ae_2\times \dots \times
A e_{n}|=K^{n-1}$$ and
$$\left[\frac{K^2}{1+(n-1)K^2}\right]^{(n-1)/2}
\|A\|_2^{n-1}|e_2\times\dots\times e_{n}|=K^{n-1}.$$ Since the set
of singular matrices is nowhere dense and closed subset of
$M_{n\times n}$, for a singular matrix $A$ there exists a sequence
of positive real numbers $\epsilon_k$ converging to zero such that
$A_k=A+\epsilon_k I$ is a nonsingular matrix, where $I$ is the
identity matrix. Moreover the constants of quasiconformality $K_k$
of $A_k$ tend to $\infty$. By applying the previous proof to $A_k$
we obtain the inequality \eqref{extra} for $K=\infty$. The
inequality \eqref{extra} is attained for $A(x)=(0,x_2,\dots,x_n)$.
\end{proof}

\begin{proposition}\label{onep}
Let $u$ be a $C^1$ surjection between the spherical rings
$\mathbf{A}(a,b)$ and $\mathbf{A}(\alpha,\beta)$, and let
$\Theta=(\theta^1,\dots,\theta^n)={u}/{|u|}$. Let $P^{n-1}$ be a
closed $n-1$ dimensional hyper-surface that separates the components
of the set $\mathbf{A}^C(a, b)$. Then
\begin{equation}\label{inequ}\int_{P^{n-1}}\|D\Theta\|_2^{n-1}d\mathcal{H}^{n-1}\geq
(n-1)^{\frac{n-1}{2}}\omega_{n-1},
\end{equation}
and
\begin{equation}\label{inequsec}
\int_{\mathbf{A}(a,b)}\|D\Theta\|_2^{n-1}dV\geq
(n-1)^{\frac{n-1}{2}}(b-a)\omega_{n-1},
\end{equation} where $\omega_{n-1}$ denote the measure of $S^{n-1}$ and
$D\Theta=\{\theta^j_{x_i}\}_{i,j=1}^n$ is the differential matrix of
$\Theta$. Moreover $d\mathcal{H}^{n-1}$ is the $n-1$-dimensional
Hausdorff surface measure and $dV$ is the volume element.
\end{proposition}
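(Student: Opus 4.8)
The plan is to establish \eqref{inequ} by combining a pointwise algebraic estimate coming from Lemma~\ref{extra} with a topological lower bound for the tangential area swept by $\Theta$ over $P^{n-1}$, and then to deduce \eqref{inequsec} by applying \eqref{inequ} to the concentric spheres $\{|x|=r\}$, $a<r<b$, and integrating in $r$ via the coarea formula.

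First I would fix $x\in P^{n-1}$, choose an orthonormal basis $v_1,\dots,v_{n-1}$ of the tangent space $T_xP^{n-1}$, and write the tangential Jacobian of $\Theta|_{P^{n-1}}\colon P^{n-1}\to S^{n-1}$ as $J_{P}\Theta(x)=|D\Theta(x)v_1\times\cdots\times D\Theta(x)v_{n-1}|$. Since $\Theta=u/|u|$ takes values in $S^{n-1}$, the image of $D\Theta(x)$ lies in the tangent space $T_{\Theta(x)}S^{n-1}$, so $D\Theta(x)$ is a singular matrix, i.e. its constant of quasiconformality is $K=\infty$. Applying Lemma~\ref{extra} to $A=D\Theta(x)$ and the unit vectors $v_1,\dots,v_{n-1}$ (for which $|v_1\times\cdots\times v_{n-1}|=1$), together with the convention $\frac{K^2}{1+(n-1)K^2}=\frac1{n-1}$ for $K=\infty$, yields the pointwise inequality
\[
J_{P}\Theta(x)\le (n-1)^{-(n-1)/2}\,\|D\Theta(x)\|_2^{\,n-1},\qquad\text{equivalently}\qquad \|D\Theta(x)\|_2^{\,n-1}\ge (n-1)^{(n-1)/2}J_{P}\Theta(x).
\]

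The main point, and the step I expect to be the genuine obstacle, is the topological lower bound $\int_{P^{n-1}}J_{P}\Theta\,d\mathcal{H}^{n-1}\ge\omega_{n-1}$. By the area formula this integral equals $\int_{S^{n-1}}\#\{x\in P^{n-1}:\Theta(x)=\omega\}\,d\mathcal{H}^{n-1}(\omega)$, so it suffices to show that $\Theta|_{P^{n-1}}$ covers $S^{n-1}$ with multiplicity $\ge1$ almost everywhere, which holds once $\deg(\Theta|_{P^{n-1}})\neq0$. Here I would use that $P^{n-1}$ separates the two components of $\mathbf{A}^C(a,b)$: it is therefore homologous in $\mathbf{A}(a,b)$ to any middle sphere $S_r=\{|x|=r\}$, whence $\deg(\Theta|_{P^{n-1}})=\pm\deg(\Theta|_{S_r})=\pm\deg(\pi\circ u|_{S_r})$, where $\pi(y)=y/|y|$ induces an isomorphism on $H_{n-1}$. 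Because $u$ maps $\mathbf{A}(a,b)$ properly onto $\mathbf{A}(\alpha,\beta)$, it sends the two ends of the domain ring to the two distinct ends of the target ring, so $u$ has nonzero topological degree and therefore $\deg(\Theta|_{S_r})\neq0$; this is exactly where properness, and not mere surjectivity (a folding map could be surjective yet have vanishing degree and violate the bound), is essential. Combining $\int_{P}J_{P}\Theta\,d\mathcal{H}^{n-1}\ge\omega_{n-1}$ with the pointwise estimate above and integrating over $P^{n-1}$ gives \eqref{inequ}.

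Finally, for \eqref{inequsec} I would apply \eqref{inequ} to the separating spheres $P=S_r$ for each $r\in(a,b)$, obtaining $\int_{S_r}\|D\Theta\|_2^{n-1}\,d\mathcal{H}^{n-1}\ge(n-1)^{(n-1)/2}\omega_{n-1}$. Since $|\nabla|x||=1$, the coarea formula gives $dV=dr\,d\mathcal{H}^{n-1}$ on the ring, so integrating this inequality in $r$ over $(a,b)$ produces the factor $b-a$ and hence
\[
\int_{\mathbf{A}(a,b)}\|D\Theta\|_2^{n-1}\,dV=\int_a^b\Big(\int_{S_r}\|D\Theta\|_2^{n-1}\,d\mathcal{H}^{n-1}\Big)\,dr\ge(n-1)^{(n-1)/2}(b-a)\,\omega_{n-1},
\]
which is \eqref{inequsec}.
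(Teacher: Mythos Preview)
Your proof is correct and follows essentially the same path as the paper's: apply Lemma~\ref{extra} with $K=\infty$ to bound the tangential Jacobian of $\Theta|_{P^{n-1}}$ by $(n-1)^{-(n-1)/2}\|D\Theta\|_2^{n-1}$, combine this with the topological lower bound $\int_{P^{n-1}} J_P\Theta\,d\mathcal{H}^{n-1}\ge\omega_{n-1}$, and then slice the annulus by the spheres $|x|=r$ and integrate in $r$ to obtain \eqref{inequsec}. The only difference is in how the topological step is handled: the paper simply asserts that $\Theta|_{P^{n-1}}$ is onto $S^{n-1}$ and cites \cite{israel} for the resulting area bound, whereas you give a self-contained degree-theoretic justification and observe (correctly) that it is properness of $u$, not bare surjectivity, that forces the degree to be nonzero.
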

\begin{proof}
Let $K^{n-1}$ be an $n-1$-dimensional rectangle and let
$g:K^{n-1}\to P^{n-1}$ be a parametrization of $P^{n-1}$. Then the
function $\Theta \circ g$ is a differentiable surjection from
$K^{n-1}$ onto the unit sphere $S^{n-1}$. Then we have $$
\int_{K^{n-1}}D_{\Theta\circ g}dV\geq \omega_{n-1}.$$ (cf.
\cite[p.~245]{israel}). According to Lemma~\ref{extra} (for
$K=\infty$), we obtain
\[\begin{split}{D_{\Theta\circ g}(x)}&={\left|D\Theta(g(x))\frac{\partial
g(x)}{\partial x_1}\times \dots \times D\Theta(g(x))\frac{\partial
g(x)}{\partial x_{n-1}}\right|}\\&\leq (n-1)^{\frac{1-n}{2}}
\|D\Theta(g(x))\|_2^{n-1}{D_g(x)}.\end{split}\] Hence we obtain
$$(n-1)^{\frac{n-1}{2}}\omega_{n-1}\leq
\int_{K^{n-1}}\|D\Theta(g(x))\|_2^{n-1}D_g(x)dV(x)=
\int_{P^{n-1}}\|D\Theta(\zeta)\|_2^{n-1}d\mathcal{H}^{n-1}(\zeta).$$
Thus we have proved (\ref{inequ}). It follows that
$$\int_{\mathbf{A}(a,b)}\|D\Theta\|_2^{n-1}dV=
\int_{a}^{b}\left(\int_{S^{n-1}(0,t)}\|D\Theta\|_2^{n-1}\,d\mathcal{H}^{n-1}\right)\,\mathrm{d}t\geq
(n-1)^{\frac{n-1}{2}}(b-a)\omega_{n-1}.$$ The proof of the
proposition has been completed.
\end{proof}
\section{The proof of main result}
One of key formulas follows from following lemma
\begin{lemma}
Let $u(x)=r(x) \Theta(x):\mathbf{A}\to \mathbf{A}'$ be a hyperbolic
harmonic mapping between the domains $\mathbf{A}$ and $\mathbf{A}'$
of the hyperbolic space $\mathbf{B}^n$ and assume that
$R(x)=2\tanh^{-1}(r(x))$ and $\rho=|x|$. Then

\begin{equation}\label{hyplap}\Delta_0 R+\frac{2(n-2)\rho}{(1-\rho^2)}\frac{\partial R}{\partial
\rho} =\frac{\sinh(2R)}{2}\|D \Theta\|_2^2,\end{equation} where
$\Delta_0$ and $\|D\Theta\|_2$ are euclidean Laplacian and
Hilbert-Schmidt norm of differential matrix respectively.
\end{lemma}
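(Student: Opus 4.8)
The plan is to start from the Li--Tam formula \eqref{lita} for the first coefficient of the tension field, specialized to the situation at hand, and extract from the harmonicity condition $\tau(u)^1=0$ the desired second-order equation for $R=2\tanh^{-1}(r)$. The key observation is that the left-hand side of \eqref{lita} involves the expression $\frac{(1-\rho^2)^2}{4}\Delta_0 r$ together with radial correction terms, and that the map $r\mapsto R=2\tanh^{-1}(r)$ is precisely the change to geodesic radial coordinate. So the first step is to rewrite everything in terms of $R$ rather than $r$, using $r=\tanh(R/2)$ and the standard identities $1-r^2=\operatorname{sech}^2(R/2)$ and $\frac{dR}{dr}=\frac{2}{1-r^2}$.

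First I would compute the chain-rule relations. Since $R$ is a function of $r$ alone and $r$ is a scalar field on $\mathbf{A}$, I have $\nabla_0 R=\frac{2}{1-r^2}\nabla_0 r$ and, after a short computation, $\Delta_0 R=\frac{2}{1-r^2}\Delta_0 r+\frac{4r}{(1-r^2)^2}|\nabla_0 r|^2$. The plan is to substitute these so that the combination $\frac{(1-\rho^2)^2}{4}\Delta_0 r$ appearing in \eqref{lita} gets traded for a term in $\Delta_0 R$, and to verify that the gradient-squared term $\frac{2|\nabla_0 r|^2}{\cdots}$ sitting inside \eqref{lita} is exactly what is needed to cancel the extra $|\nabla_0 r|^2$ produced by the chain rule. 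This cancellation is the crux, and I expect it to be the main bookkeeping obstacle: one must track the factors of $(1-r^2)$ and $(1-\rho^2)$ carefully, since both the domain metric (through $\rho=|x|$) and the target metric (through $r=|u|$) enter, and these are genuinely different variables.

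Next I would handle the remaining two pieces of \eqref{lita}. The middle term $\frac{1}{4}\cdot 2(m-2)(1-\rho^2)\rho\frac{\partial\rho}{\partial\rho}$ — here with $m=n$ for a map defined on a domain of $\mathbf{B}^n$ — should, after converting $\frac{\partial r}{\partial\rho}$-type derivatives into $\frac{\partial R}{\partial\rho}$ and absorbing the appropriate $\frac{(1-\rho^2)^2}{4}$ normalization, produce exactly the advection term $\frac{2(n-2)\rho}{1-\rho^2}\frac{\partial R}{\partial\rho}$ on the left of \eqref{hyplap}. Meanwhile the angular part, namely $-\frac{r(1-\rho^2)^2(1+r^2)\sum_{p,q}h_{pq}\langle\nabla_0\theta^p,\nabla_0\theta^q\rangle}{4(1-r^2)}$, is where $\|D\Theta\|_2^2$ is born: the sum $\sum_{p,q=2}^n h_{pq}\langle\nabla_0\theta^p,\nabla_0\theta^q\rangle$ is the spherical Dirichlet density of $\Theta$, and together with the purely radial contribution to the differential it should assemble into the full Hilbert--Schmidt norm $\|D\Theta\|_2^2$ after using $r(1+r^2)/(1-r^2)$-type factors and the identity $\sinh(2R)=\frac{4r(1+r^2)}{(1-r^2)^2}\cdot\frac{(1-r^2)^2}{2(1+r^2)}$; concretely $\sinh(2R)=2\sinh(R)\cosh(R)$ with $\sinh(R/2),\cosh(R/2)$ expressed through $r$ gives the coefficient $\frac{\sinh(2R)}{2}$.

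Finally I would set $\tau(u)^1=0$, multiply through by the common normalizing factor $\frac{2}{(1-\rho^2)^2(1-r^2)^{-1}}$ or its reciprocal as needed to clear denominators, and collect terms so that all the radial derivatives land on the left as $\Delta_0 R+\frac{2(n-2)\rho}{1-\rho^2}\frac{\partial R}{\partial\rho}$ and all the angular curvature data lands on the right as $\frac{\sinh(2R)}{2}\|D\Theta\|_2^2$. The one subtlety I would flag is the precise normalization constant between $\tau(u)^1$ and the stated equation — the factor $\frac{(1-\rho^2)^2}{4}$ in \eqref{lita} must be divided out cleanly, and one should double-check that $\|D\Theta\|_2^2$ as defined via the differential matrix $D\Theta=\{\theta^j_{x_i}\}$ coincides with $\sum_{p,q}h_{pq}\langle\nabla_0\theta^p,\nabla_0\theta^q\rangle+|\nabla_0\theta^1|^2$-style expression coming out of \eqref{lita}, i.e.\ that the unit-sphere metric $h_{pq}$ bookkeeping agrees with the ambient Euclidean Hilbert--Schmidt norm of the angular map. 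Once those two normalizations are confirmed, \eqref{hyplap} follows directly by algebraic rearrangement.
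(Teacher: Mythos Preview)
Your proposal is correct and follows essentially the same route as the paper: both start from $\tau(u)^1=0$ in \eqref{lita}, use the chain rule linking $r$ and $R=2\tanh^{-1}r$ to trade $\Delta_0 r$ for $\Delta_0 R$, observe that the $|\nabla_0 r|^2$ term in \eqref{lita} exactly cancels the extra gradient-squared contribution from the chain rule, and then identify $\sum_{p,q} h_{pq}\langle\nabla_0\theta^p,\nabla_0\theta^q\rangle$ with $\|D\Theta\|_2^2$. The only cosmetic difference is that the paper writes $r=g(R)=\tanh(R/2)$ and differentiates in that direction (so the cancellation appears via the identity $-g''/(g')^2=\sinh R=2g/(1-g^2)$), whereas you differentiate $R$ as a function of $r$; the algebra is the same.
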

\begin{proof}
Since  $u$ is harmonic, from the equation  $\tau(u)^1=0$, where
$\tau(u)^1$ is defined in \eqref{lita}, we obtain
$$\Delta_0 r+\left(2(n-2)(1-\rho^2)^{-1}\rho\frac{\partial \rho}{\partial \rho} +
\frac{r(2|\nabla_0r|^2-(1+r^2)\sum_{p,q=2}^nh_{pq}\left<\nabla_0\theta^p,\nabla_0\theta^q\right>)}{1-r^2}\right)=0.$$
Let $g(q)=\tanh(q/2)$. Then $$\Delta_0 r= g''(R(x))|\nabla_0 R|^2 +
g'(R(x))\Delta_0 R$$ and $$|\nabla_0 r|^2=(g')^2|\nabla_0 R|^2.$$
Since
$$\frac{2g}{1-g^2}=\sinh(R)=-\frac{g''}{{g'}^2}$$ it follows that
$$\frac{1}{2} \mathrm{sech}^2\frac{R}{2}\Delta_0 R+(n-2) \mathrm{sech}^2\frac{R}{2}\frac{\rho}{1-\rho^2}\frac{\partial R}{\partial \rho}
=\frac{r((1+r^2)\sum_{p,q=2}^nh_{pq}\left<\nabla_0\theta^p,\nabla_0\theta^q\right>)}{1-r^2},$$ i.e.
$$\Delta_0 R+2(n-2)(1-\rho^2)^{-1}\rho\frac{\partial R}{\partial \rho}
=\frac{\sinh(2R)}{2}\sum_{p,q=2}^nh_{pq}\left<\nabla_0\theta^p,\nabla_0\theta^q\right>),$$
which can be written as
$$\Delta_0 R+2(n-2)(1-\rho^2)^{-1}\rho\frac{\partial R}{\partial \rho}
=\frac{\sinh(2R)}{2}\|D\Theta\|_2^2.$$
\end{proof}

\begin{proof}[Proof of Theorem~\ref{peo} (Theorem~\ref{peopeo})]
Let $\alpha'=2\tanh^{-1}\alpha$ and $\beta'=2\tanh^{-1}\beta$. Let
$\varphi_k :[\alpha',\beta'] \mapsto [\alpha',\beta'] $ be a
sequence of non decreasing functions, constant in some small
neighborhood of $\alpha'$, for example in
$[\alpha',\alpha'+(\beta'-\alpha')/k]$ and satisfying the following
conditions
\begin{equation}\label{posdif}
0\leq\varphi'_k(R)\to 1 \ \text{and} \ 0\leq\varphi''_k(R)\to 0 \
\text{as} \ k\to \infty
\end{equation}
$ \text{for every} \ R \in [\alpha',\beta'] .$ (See \cite{israel})
for an example of such sequence).  Let $R_k$ be a function defined
on $\{x:a<|x|< b \}$ by $R_k(x)=\varphi_{k}(R(x))$. Then
\begin{equation}\label{ariu}\Delta_0 R_k(x)=\varphi_k''(R(x))|\nabla
R(x)|^2+\varphi_k'(R(x))\Delta_0 R(x).\end{equation} Therefore
\[\begin{split}\Delta_0 R_k+2(n-2)(1-\rho^2)^{-1}\rho\frac{\partial
R_k}{\partial \rho}&=\varphi_k''(R(x))|\nabla
R(x)|^2+\varphi_k'(R(x))\Delta_0
R(x)\\&+\varphi_k'(R(x))2(n-2)(1-\rho^2)^{-1}\rho\frac{\partial
R}{\partial \rho}\\&=\varphi_k''(R(x))|\nabla
R(x)|^2+\varphi_k'(R(x))\frac{\sinh(2R_k)}{2}\|D\Theta_k\|_2^2.\end{split}\]
Thus \begin{equation}\label{deltak}\Delta_0
R_k+2(n-2)(1-\rho^2)^{-1}\rho\frac{\partial R_k}{\partial \rho}\ge
0\end{equation} for every $k$. By \eqref{ariu} and (\ref{posdif}) it
follows at once that
$$\Delta_0 R_k(x)\to \Delta_0  R(x) \ \text{as} \ k\to \infty$$ for
every $x\in \mathbf{A}(a,b)$. Similarly we obtain
$$\frac{\partial R_k}{\partial \rho}(x)\to \frac{\partial
R}{\partial \rho}(x) \ \text{as} \ k\to \infty$$ uniformly on $\{x:
|\zeta|=s\}$ for every $s\in (a,b)$.
 By applying Green's formula for $R_k$ on
$\{x:a \leq|x|\leq s \}$, we obtain
\begin{equation*}
\int_{|\zeta|=s}\frac{\partial R_k}{\partial
\rho}\,d\mathcal{H}^{n-1}(\zeta)- \int_{|\zeta|=a}\frac{\partial
R_k}{\partial \rho}\,d\mathcal{H}^{n-1}(\zeta) =\int_{a\leq|x|\leq s
}\Delta_0  R_k \, dV(x).
\end{equation*}
Since the function $R_k$ is constant in some neighborhood of the
sphere $|\zeta|=a$, it follows that for $a<s<b$ and large enough $k$
$$\int_{|\zeta|=s}\frac{\partial R_k}{\partial \rho}\,d\mathcal{H}^{n-1}(\zeta)=
\int_{a \leq|x|\leq s}\Delta_0  R_k \, dV(x). $$ Therefore
\[\begin{split}\int_{|\zeta|=s}\frac{\partial R_k}{\partial
\rho}\,d\mathcal{H}^{n-1}(\zeta)&+\int_{A_s}2(n-2)(1-\rho^2)^{-1}\rho\frac{\partial
R_k}{\partial \rho}dV(x)\\&=\int_{\alpha \leq|x|\leq
s}\left[\Delta_0 R_k+2(n-2)(1-\rho^2)^{-1}\rho\frac{\partial
R_k}{\partial \rho} \right]\, dV(x).\end{split}\] Further for
$\omega\in S^{n-1}$
\[\begin{split}\lim_{k\to\infty}\int_a^s\frac{\rho^{n}}{1-\rho^2}&\frac{\partial
R_k}{\partial \rho}(s\omega) d\rho\\&=
\lim_{k\to\infty}\left[\frac{s^{n}R_k(s\omega)}{1-s^2}-\frac{a^{n}R_k(a
\omega)}{1-a^2}-\int_{a}^s\frac{\rho^{n-1} (n + 2 \rho^2 - n
\rho^2)}{(1 - \rho^2)^2}R_k(\rho \omega)d\rho\right]
\\&\le \frac{s^{n}}{1-s^2}\beta'-\frac{a^{n}}{1-a^2}\alpha'-\int_{a}^s
\frac{\rho^{n-1} (n + 2 \rho^2 - n \rho^2)}{(1 - \rho^2)^2}\alpha'
d\rho\\&= \frac{s^{n}}{1-s^2}(\beta'-\alpha').\end{split}\]
Therefore
\begin{equation}\label{drilip}\begin{split}\int_{|\zeta|=s}\frac{\partial R}{\partial \rho}\,d\mathcal{H}^{n-1}(\zeta) &+
2(n-2)\int_{S^{n-1}}\frac{s^{n}}{1-s^2}(\beta'-\alpha')
d\mathcal{H}^{n-1}\\&\ge \limsup_{k\to\infty}\int_{a \leq|x|\leq
s}\bigg[\Delta_0 R_k+2(n-2)(1-\rho^2)^{-1}\rho\frac{\partial
R_k}{\partial \rho} \bigg]\, dV(x).
\end{split}\end{equation}
By applying Fatou's lemma, having in mind \eqref{deltak} and using
\eqref{hyplap}, letting $k \to \infty$, we obtain
\begin{equation}\label{drili}\begin{split}\limsup_{k\to\infty}\int_{a \leq|x|\leq s}\bigg[\Delta_0
R_k&+2(n-2)(1-\rho^2)^{-1}\rho\frac{\partial R_k}{\partial \rho}
\bigg]\, dV(x)\\&\ge \int_{a \leq|x|\leq s}\bigg[\Delta_0
R+2(n-2)(1-\rho^2)^{-1}\rho\frac{\partial R}{\partial \rho} \bigg]\,
dV(x)\\&=
\int_{S^{n-1}}\int_a^s\rho^{n-1}\frac{\sinh(2R)}{2}\|\nabla
\Theta\|_2^2 d\rho d\mathcal{H}^{n-1}.\end{split}\end{equation} From
\eqref{drilip} and \eqref{drili} we obtain
\begin{equation}\label{drilica}\begin{split}\int_{|\zeta|=s}\frac{\partial R}{\partial \rho}\,d\mathcal{H}^{n-1}(\zeta) &+
2(n-2)\int_{S^{n-1}}\frac{s^{n}}{1-s^2}(\beta'-\alpha')
d\mathcal{H}^{n-1}\\&\ge
\int_{S^{n-1}}\int_a^s\rho^{n-1}\frac{\sinh(2R)}{2}\|\nabla
\Theta\|_2^2 d\rho d\mathcal{H}^{n-1}.
\end{split}\end{equation}
It follows that
\[\begin{split}s^{n-1}\frac{\partial }{\partial s} \int_{|\zeta|=1}R(s\zeta)
\,d\mathcal{H}^{n-1}(\zeta)&+\int_{S^{n-1}}\frac{2(n-2)s^{n}}{1-s^2}(\beta'-\alpha')
d\mathcal{H}^{n-1}\\&\geq
\int_{S^{n-1}}\int_a^s\rho^{n-1}\frac{\sinh(2R)}{2}\|\nabla
\Theta\|_2^2 d\rho d\mathcal{H}^{n-1}\end{split}\] i.e.
\[\begin{split}s^{n-1}\frac{\partial
}{\partial s} \int_{|\zeta|=1}R(s\zeta)
\,d\mathcal{H}^{n-1}(\zeta)&+2(n-2)\omega_{n-1}\frac{s^{n}}{1-s^2}(\beta'-\alpha')
\\&\geq \int_{S^{n-1}}\int_a^s\rho^{n-1}\frac{\sinh(2R)}{2}\|\nabla
\Theta\|_2^2 d\rho d\mathcal{H}^{n-1}\end{split}\] or what is the
same
\[\begin{split}\frac{\partial }{\partial s} \int_{|\zeta|=1}R(s\zeta)
\,\mathrm{d}S(\zeta)&+2(n-2)\omega_{n-1}\frac{s}{1-s^2}(\beta'-\alpha')
\\&\geq
s^{1-n}\int_{S^{n-1}}\int_a^s\rho^{n-1}\frac{\sinh(2R)}{2}\|\nabla
\Theta\|_2^2 d\rho d\mathcal{H}^{n-1}.\end{split}\] Integrating the
previous expression w.r.t $s$ on $[a,b]$ we obtain
\begin{equation}\label{pomo}\begin{split}\omega_{n-1}(\beta'-\alpha')&+(n-2)\omega_{n-1}\log\frac{1-a^2}{1-b^2}(\beta'-\alpha')
\\&\geq \int_a^b
s^{1-n}\int_{S^{n-1}}\int_a^s\rho^{n-1}\frac{\sinh(2R)}{2}\|\nabla
\Theta\|_2^2 d\rho d\mathcal{H}^{n-1} ds\\&\ge
\frac{\sinh(2\alpha')}{2}\int_a^b
s^{1-n}\int_{S^{n-1}}\int_a^s\rho^{n-1}\|\nabla \Theta\|_2^2 d\rho
d\mathcal{H}^{n-1} ds.\end{split}\end{equation} Now we put $n=3$,
which implies this simple fact $n-1=2$. Combining now \eqref{pomo}
with Proposition~\ref{onep} we obtain
$$(\beta'-\alpha')+(n-2)\log\frac{1-a^2}{1-b^2}(\beta'-\alpha')
\geq {\sinh(2\alpha')}\int_a^b s^{-2} (s-a)ds$$
and therefore
\begin{equation}\label{pola}(\beta'-\alpha')\left(1+\log\frac{1-a^2}{1-b^2}\right) \geq
{\sinh(2\alpha')}\left(-1 + \frac{a}{b} +
\log\frac{b}{a}\right).\end{equation} By using the formulas
$a'=2\tanh^{-1}a$ and $b'=2\tanh^{-1}b$, we obtain \eqref{beprim},
for $x_0=0$ and $y_0=0$. The general case follows from
Proposition~\ref{eko}. By using the following formulas
$$2\tanh^{-1}t=\log\frac{1+t}{1-t}\text{   and   }
\sinh(4\tanh^{-1}t)=\frac{4 t (1 + t^2)}{(1 - t^2)^2},$$ we obtain
\eqref{ence}. This finishes the required proofs.
\end{proof}
\begin{example}
Assume that $$u(x)=r(\rho) \frac{x}{|x|}$$ is a hyperbolic harmonic
mapping. Then from \eqref{hyplap}, taking $\rho=e^t$,  we obtain
that $r(\rho)=\tanh\frac{y(t)}{2}$ where $y$ is a solution of the
differential equation
$$y''+(2-n)\coth(t) y'=\frac{(n-1)\sinh(2y)}{2}.$$ Two of many possible
solutions of the previous differential equation are
$y_{+}(t)=2\tanh^{-1}(e^{t})$ and $y_{-}(t)=2\tanh^{-1}(e^{-t})$.
The function $y_+$ produces the identity mapping $u_+(x)=x$, while
the function $y_{-}$ produces the inversion $u_{- }(x)=x/|x|^2$.
Both are hyperbolic harmonic mappings, but the second one maps the
complement of the unit ball onto the unit ball. Notice  that both,
the unit ball and its complement, with appropriate metrics, can
identify the hyperbolic space $\mathbf{H}^n$.

\end{example}

\end{document}